\newtheorem{cor}{Corollaire}
\newtheorem{thm}{Théorème}
\newtheorem{lem}{Lemme}
\newtheorem{prop}{Proposition}
\theoremstyle{definition}
\newtheorem{defn}{Définition}
\title{Les groupes de Burger-Mozes ne sont pas kählériens} 
\author{Thibaut Delcroix} 
\date{} 
\address{Institut Fourier, CNRS UMR 5582, Université Grenoble 1, 100 rue des maths, BP 74, 38402 Saint-Martin d'Hères, France}
\email{thibaut.delcroix@ujf-grenoble.fr}
\begin{document}

\maketitle

\selectlanguage{english}
\begin{abstract}
Burger and Mozes constructed examples of infinite simple groups which are lattices in the group of
automorphisms of a cubical building. We show that there can be no morphism with finitely 
generated kernel from a Kähler group to one of these groups. We obtain as a consequence that 
these groups are not Kähler.
\end{abstract}

\selectlanguage{frenchb}
\begin{abstract}
Burger et Mozes ont construit des exemples de groupes simples infinis, qui sont des réseaux
dans le groupe des automorphismes d'un immeuble cubique. On montre qu'il n'existe pas de morphisme
d'un groupe kählérien vers l'un de ces groupes dont le noyau soit finiment engendré. On en 
déduit que ces groupes ne sont pas kählériens. 
\end{abstract}

\section{Introduction}

Bien que des exemples non résiduellement 
finis aient déjà été exhibés, l'existence de groupes fondamentaux de variétés kählériennes compactes, 
appelés grou\-pes kählériens,
infinis et simples n'est pas encore connue. 
Les candidats explicites à ce rôle sont en réalité assez peu nombreux. En effet, un groupe
kählérien est de présentation finie et les exemples de groupes infinis simples de présentation finie
sont des objets dont la construction n'est pas aisée. 

Burger et Mozes ont exhibé dans \cite{BM}  les premiers ex\-emples de 
groupes infinis simples de présentation finie et sans torsion. 
Il existe une famille infinie $\mathcal{F}_{BM}$ de groupes non isomorphes telle que tout groupe 
$\Gamma$ de  $\mathcal{F}_{BM}$
est infini, simple, sans torsion et de présentation finie. 
Ces groupes sont des réseaux du groupe des automorphismes d'un immeuble cubique.
Leur construction se fait en deux étapes. 

La première est de prouver un analogue du théorème du 
sous-groupe normal de Margulis pour les réseaux dans les produits d'arbres, c'est-à-dire montrer 
que pour certains réseaux dans le groupe d'automorphismes d'un produit d'arbres, tout sous-groupe 
distingué est d'indice fini dans le réseau. Ceci est établi par Burger et Mozes dans le cas de 
réseaux dont les projections dans les groupes d'automorphismes de chacun des arbres vérifient des 
propriétés de transitivité forte ($\infty$-transitivité ou transitivité du stabilisateur de n'importe 
quel sommet sur chaque sphère centrée en ce sommet). 

La seconde étape est de construire un réseau vérifiant les hypothèses de ce théorème du sous-groupe normal 
et qui soit de plus non-résiduellement fini. Un tel réseau est alors virtuellement simple, c'est-à-dire 
qu'il admet un sous-groupe simple d'indice fini. 

La motivation de ce travail est de montrer que ces groupes ne sont pas kählériens et même une propriété 
plus forte : il n'existe pas de morphisme ayant un noyau de type fini d'un groupe kählérien vers un de ces 
groupes de Burger-Mozes. 

Nous démontrons donc le théorème suivant :

\begin{thm}
\label{noyau}
Soit $\Gamma$ un groupe de Burger-Mozes, $X$ une variété kählérienne et $G:=\pi_1(X)$ son groupe fondamental.
Supposons qu'il existe un morphisme surjectif $\rho: G\rightarrow \Gamma$, alors son noyau $\mathrm{ker}(\rho)$
n'est pas de type fini.
\end{thm}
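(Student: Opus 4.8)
The plan is to exploit the action of $\Gamma$ on a single factor of the product of trees. Write $\Gamma$ as a cocompact lattice in $\mathrm{Aut}(T_1)\times\mathrm{Aut}(T_2)$ and let $p_1\colon\Gamma\to\mathrm{Aut}(T_1)$ be the first projection. Since $\Gamma$ is simple and acts non-trivially on $T_1$, the kernel of $p_1$ is a proper normal subgroup, hence trivial; thus $p_1$ is injective and $p_1\circ\rho\colon G\to\mathrm{Aut}(T_1)$ has exactly the same kernel as $\rho$. I would then observe that this action of $G$ on $T_1$ is non-elementary: if it fixed a vertex, an end, or a pair of ends, then $p_1(\Gamma)\cong\Gamma$ would embed into the stabilizer, which is an amenable subgroup of $\mathrm{Aut}(T_1)$, contradicting the non-amenability of the Burger-Mozes group $\Gamma$ (it contains non-abelian free subgroups, being a lattice in a product of trees).

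First I would feed this non-elementary action of the K\"ahler group $G$ on the tree $T_1$ into the factorization theorem of Gromov and Schoen for equivariant harmonic maps to trees. This produces a surjective holomorphic map with connected fibres $f\colon X\to\Sigma$ onto a hyperbolic orbifold Riemann surface, together with a factorization of the action: the induced homomorphism $\phi:=f_*\colon G\to\pi_1^{\mathrm{orb}}(\Sigma)=:S$ is surjective, $S$ is a cocompact Fuchsian group, and $p_1\circ\rho$ factors through $\phi$. In particular $\ker\phi\subseteq\ker(p_1\circ\rho)=\ker\rho$. Since $\rho$ is surjective and factors through the surjection $\phi$, the induced map $\bar\rho\colon S\to\Gamma$ is surjective as well.

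Now suppose, for contradiction, that $N:=\ker\rho$ is finitely generated. Then $\phi(N)$ is finitely generated (the image of a finitely generated group), and it is normal in $S$ because $N$ is normal in $G$ and $\phi$ is onto. From $\ker\phi\subseteq N$ and $S/\phi(N)\cong G/N\cong\Gamma$ one sees that $\phi(N)$ has infinite index in $S$. But a finitely generated normal subgroup of infinite index in a cocompact Fuchsian group must be trivial: a finitely generated infinite-index subgroup is geometrically finite of infinite covolume, so its limit set is a proper subset of the boundary circle, whereas a non-trivial normal subgroup would have a non-empty, hence (by minimality and the first-kind hypothesis) full, limit set. Therefore $\phi(N)=1$, so $N=\ker\phi$ and $\Gamma\cong S=\pi_1^{\mathrm{orb}}(\Sigma)$. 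This is absurd, since a cocompact Fuchsian group is infinite and residually finite, hence admits proper finite-index normal subgroups and is not simple, whereas $\Gamma$ is infinite and simple. Thus $N$ cannot be finitely generated.

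The crux of the argument is the factorization step: the real work lies in the harmonic-map analysis of Gromov and Schoen, which turns the abstract non-elementary tree action into a genuine fibration of $X$ over a curve, together with the verification that the base orbifold is hyperbolic. The latter is forced here, since $\pi_1^{\mathrm{orb}}(\Sigma)$ surjects onto the non-amenable, non-elementary group $\Gamma$, which rules out spherical or Euclidean bases (whose orbifold groups are virtually abelian and admit no non-elementary tree action). Once the fibration is in hand the passage from \emph{finitely generated kernel} to a contradiction is soft, resting only on the simplicity of $\Gamma$ and the elementary structure theory of normal subgroups of surface groups; it is worth noting that a single tree factor already suffices, the second projection yielding no further constraint.
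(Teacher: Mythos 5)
Your overall architecture coincides with the paper's: run the Gromov--Schoen/Korevaar--Schoen factorization (Theorem \ref{GS}) to produce a fibration of $X$ onto a hyperbolic orbisurface, observe that the image of $\ker\rho$ is a finitely generated normal subgroup of infinite index in the cocompact Fuchsian group $S$, hence trivial by Greenberg's theorem, conclude $\Gamma\cong S$, and contradict simplicity via the residual finiteness of Fuchsian groups. This is exactly the paper's Proposition \ref{mor} followed by its proof of Theorem \ref{noyau}. The one structural difference is the tree you feed into the machine: you project to a single factor $T_1$ of the ambient product of trees, whereas the paper uses the Bass--Serre tree of the strict amalgam decomposition $\Gamma=\Gamma_1*_{\Delta}\Gamma_2$ furnished by Burger and Mozes, for which minimality and the absence of fixed ends are immediate.

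That choice is where your argument has a genuine gap. To rule out a fixed vertex, a fixed end, or an invariant pair of ends for the $p_1$-action, you claim the corresponding stabilizers in $\mathrm{Aut}(T_1)$ are amenable and that this contradicts $\Gamma$ containing a free subgroup. But these stabilizers are amenable only as \emph{topological} groups (a vertex stabilizer is compact, an end stabilizer is (locally elliptic)-by-$\mathbb{Z}$, a line stabilizer is compact-by-dihedral); as abstract groups they contain non-abelian free subgroups --- for instance any free group, being residually finite, embeds into the compact stabilizer of a vertex of a regular tree. So the embedding of $\Gamma$ into such a stabilizer yields no contradiction with non-amenability, and your non-elementarity claim is left unproved. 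The claim is nonetheless true, and the correct reason is the cocompactness of the lattice: $p_1(\Gamma)$ acts cocompactly on $T_1$, hence with unbounded orbits and with limit set equal to the whole (infinite) boundary of $T_1$, which excludes fixed vertices, fixed or paired ends, and invariant proper subtrees. Alternatively one can bypass the product altogether, as the paper does, by using the strict amalgam decomposition. With that step repaired, the remainder of your argument (normality and finite generation of $\phi(\ker\rho)$, triviality by the normal subgroup property of Fuchsian groups, residual finiteness versus simplicity) is correct and matches the paper's.
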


En considérant le morphisme identité d'un groupe de Burger-Mozes, on obtient le corollaire suivant :

\begin{cor}
\label{nonK}
Les groupes de Burger-Mozes ne sont pas kählériens.
\end{cor}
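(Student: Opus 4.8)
Le plan est de d\'eduire le Corollaire \ref{nonK} directement du Th\'eor\`eme \ref{noyau}, en l'appliquant au morphisme identit\'e, et de raisonner par l'absurde.

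Supposons qu'un groupe de Burger-Mozes $\Gamma$ soit k\"ahl\'erien. Par d\'efinition d'un groupe k\"ahl\'erien, il existe alors une vari\'et\'e k\"ahl\'erienne compacte $X$ munie d'un isomorphisme $\pi_1(X) \simeq \Gamma$. En posant $G := \pi_1(X)$, cet isomorphisme fournit en particulier un morphisme surjectif $\rho : G \to \Gamma$. J'appliquerais alors le Th\'eor\`eme \ref{noyau} \`a ce choix de $X$, $G$ et $\rho$ : il affirme que $\mathrm{ker}(\rho)$ n'est pas de type fini. Or $\rho$ est ici un isomorphisme, de sorte que $\mathrm{ker}(\rho) = \{e\}$ est le groupe trivial, lequel est \'evidemment de type fini (engendr\'e par l'ensemble vide). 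Cette contradiction montre qu'aucun groupe de Burger-Mozes ne peut \^etre k\"ahl\'erien.

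Il n'y a pas ici de v\'eritable obstacle : tout le contenu math\'ematique r\'eside dans le Th\'eor\`eme \ref{noyau}, et le seul point \`a contr\^oler est que le morphisme identit\'e v\'erifie bien les hypoth\`eses de ce th\'eor\`eme, ce qui est imm\'ediat. Le corollaire en est donc une cons\'equence purement formelle.
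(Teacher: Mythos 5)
Your proof is correct: it is precisely the one-line deduction the paper itself announces in the introduction (``En consid\'erant le morphisme identit\'e d'un groupe de Burger-Mozes, on obtient le corollaire suivant''), and you are right that nothing needs checking beyond the surjectivity of the isomorphism $\pi_1(X)\simeq\Gamma$ and the fact that the trivial kernel is finitely generated. Note, however, that the proof the paper actually writes out for the corollary takes a genuinely different route: it invokes Proposition \ref{simplek}, which shows that a simple K\"ahler group would have Serre's property (FA) --- using the characterization of Th\'eor\`eme \ref{FA}, with Th\'eor\`eme \ref{GS} and residual finiteness of hyperbolic orbisurface groups serving to exclude the strict amalgam case --- and then derives the contradiction from the fact that a Burger-Mozes group is a strict amalgam acting without inversion and without fixed vertex on its Bass--Serre tree. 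Your argument is the more economical one once Th\'eor\`eme \ref{noyau} is granted, but it thereby relies on the full strength of Proposition \ref{mor}, in particular on the Griffiths--Greenberg fact that a nontrivial finitely generated normal subgroup of an orbisurface group has finite index; the (FA) route bypasses all kernel considerations and yields the extra structural statement that any simple infinite K\"ahler group would satisfy (FA). Both arguments ultimately rest on the factorization result (Th\'eor\`eme \ref{GS}) and on the residual finiteness of hyperbolic orbisurface groups, so the difference is in the intermediate bookkeeping rather than in the underlying machinery.
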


Nous montrerons également le corollaire de manière différente en prouvant qu'un éventuel groupe kählérien
infini et simple vérifierait la propriété (FA).

\section{Factorisation des actions sur les arbres des groupes kählériens}

On rappelle qu'une \emph{orbisurface de Riemann hyperbolique} $S$  est un quotient du disque 
unité $D$ de $\mathbb{C}$ par un sous-groupe discret cocompact $P$  de 
$\mathrm{PSL}_2(\mathbb{R})$. On appelle $P$ le \emph{groupe fondamental} (orbifolde) de $S$ et on 
le note $\pi_1(S)$.

Une application $f$ d'une variété complexe $X$ vers $S$
est dite holomorphe si elle se relève en une application holomorphe du revêtement 
fondamental $\widetilde{X}$ de $X$ vers $D$. Une telle application induit un
morphisme de groupes fondamentaux $f_*:\pi_1(X)\rightarrow \pi_1(S)$.

On appellera \emph{fibration} une application holomorphe (orbifolde ou non)
surjective à fibres connexes.
Si $f:X\rightarrow S$ est une fibration sur une surface de Riemann alors 
le morphisme $f_*:\pi_1(X)\rightarrow \pi_1(S)$ est surjectif.
Cette propriété reste vraie pour les fibrations sur des orbisurfaces de Riemann.

L'outil pour démontrer le théorème \ref{nonK} est le résultat suivant.
\begin{thm}
\label{GS}
Soient $X$ une variété kählérienne compacte, $G:=\pi_1(X)$ son groupe fondamental, 
$T$ un arbre infini ayant plus de trois bouts, et
$\rho:G\rightarrow \mathrm{Aut}(T)$
un morphisme tel que l'action associée de $G$ sur $T$ ne stabilise aucun bout, ni aucun sous-arbre
propre de $T$. 

Alors 
il existe 
une orbisurface de Riemann hyperbolique $S_{\mathrm{orb}}$ et
une fibration $f:X\rightarrow S_{\mathrm{orb}}$,
telle que $\rho$ factorise par $f_*$, c'est-à-dire qu'il existe un morphisme 
$\phi : \pi_1(S_{\mathrm{orb}}) \rightarrow \mathrm{Aut}(T)$ tel que 
$$\rho= \phi \circ f_*.$$
\end{thm}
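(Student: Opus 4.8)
Le plan est d'obtenir la factorisation en passant par une application harmonique \'equivariante \`a valeurs dans l'arbre, selon la strat\'egie de Gromov et Schoen. Je munirais d'abord $T$ de sa m\'etrique simpliciale, qui en fait un espace CAT(0) complet sur lequel $G$ agit par isom\'etries via $\rho$. Les points du bord \`a l'infini de $T$ \'etant exactement ses bouts, l'hypoth\`ese que l'action n'en fixe aucun est pr\'ecis\'ement la condition qui, par la th\'eorie de Korevaar et Schoen, assure l'existence d'une application harmonique $\rho$-\'equivariante $u : \widetilde{X} \to T$ du rev\^etement universel $\widetilde{X}$ de $X$ vers $T$. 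L'hypoth\`ese qu'aucun sous-arbre propre n'est stabilis\'e, qui interdit en particulier tout point fixe, garantit que $u$ est non constante.

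J'invoquerais ensuite la th\'eorie de r\'egularit\'e de Gromov et Schoen : $u$ est localement lipschitzienne et lisse en dehors d'un ensemble singulier de codimension de Hausdorff au moins $2$. C'est ici qu'intervient de mani\`ere essentielle la structure k\"ahl\'erienne de $X$ : par une formule de Bochner, dans l'esprit du th\'eor\`eme de Sampson \'etendu par Gromov et Schoen aux espaces m\'etriques \`a courbure n\'egative ou nulle, on obtient que $u$ est pluriharmonique.

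Comme l'arbre $T$ est de dimension $1$, la pluriharmonicit\'e force la diff\'erentielle complexe $\partial u$ \`a \^etre de rang complexe au plus $1$ l\`a o\`u elle ne s'annule pas : localement, $u$ y appara\^it comme la partie r\'eelle d'une fonction holomorphe \`a valeurs scalaires. La forme $\partial u$ d\'efinit alors un feuilletage holomorphe dont les feuilles, composantes des fibres de $u$, sont analytiques complexes. Sur la vari\'et\'e k\"ahl\'erienne compacte $X$, et quitte \`a passer par la factorisation de Stein, ce feuilletage provient d'une fibration holomorphe $f : X \to S$ sur une surface de Riemann compacte. La structure d'orbifold sur la base, issue des fibres multiples de $f$, fournit l'orbisurface $S_{\mathrm{orb}}$, et l'\'equivariance de $u$ redescend en un morphisme $\phi : \pi_1(S_{\mathrm{orb}}) \to \mathrm{Aut}(T)$ v\'erifiant $\rho = \phi \circ f_*$.

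Il reste \`a v\'erifier que $S_{\mathrm{orb}}$ est hyperbolique au sens rappel\'e plus haut. Puisqu'une fibration sur une orbisurface induit un morphisme surjectif entre groupes fondamentaux, l'image de $\phi$ co\"{\i}ncide avec celle de $\rho$, laquelle agit sur $T$ de fa\c{c}on non \'el\'ementaire (plus de trois bouts, aucun bout fixe, aucun sous-arbre propre invariant). Ainsi $\pi_1(S_{\mathrm{orb}})$ admet pour quotient un groupe non \'el\'ementaire, ce qui exclut les orbisurfaces sph\'eriques et euclidiennes et force une caract\'eristique d'Euler orbifolde strictement n\'egative. L'obstacle principal me para\^it \^etre cette \'etape de factorisation : le passage rigoureux de la pluriharmonicit\'e de $u$, avec le contr\^ole de son comportement au voisinage de l'ensemble singulier, \`a une v\'eritable fibration holomorphe sur une orbisurface de Riemann. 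C'est le c{\oe}ur analytique de la preuve, h\'erit\'e des m\'ethodes de Gromov et Schoen.
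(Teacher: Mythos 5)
Votre strat\'egie co\"{\i}ncide avec celle du texte jusqu'\`a la pluriharmonicit\'e de $u$ (application harmonique \'equivariante via Korevaar--Schoen parce que l'arbre n'est pas suppos\'e localement fini, ensemble singulier de codimension de Hausdorff au moins $2$, pluriharmonicit\'e \`a la Gromov--Schoen), mais l'\'etape que vous signalez vous-m\^eme comme l'obstacle principal --- le passage de la pluriharmonicit\'e \`a la fibration --- constitue un v\'eritable trou, et il ne se r\'eduit pas \`a un probl\`eme de r\'egularit\'e pr\`es de l'ensemble singulier. Deux points manquent. D'abord, la diff\'erentielle de $u$ pr\`es d'un point r\'egulier n'est d\'efinie qu'au signe pr\`es, les g\'eod\'esiques de $T$ n'ayant pas d'orientation canonique : l'objet global n'est donc pas une $1$-forme holomorphe mais une diff\'erentielle quadratique $\beta=\theta^2$, et il faut passer au rev\^etement double canonique $Y=\{(x,w):w^2=\beta(x)\}$ pour disposer d'une vraie $1$-forme $\alpha$. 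Ensuite et surtout, le feuilletage d\'efini par une $1$-forme holomorphe sur une vari\'et\'e k\"ahl\'erienne compacte ne provient pas, en g\'en\'eral, d'une fibration sur une courbe : un feuilletage lin\'eaire g\'en\'erique sur un tore a des feuilles denses. La factorisation de Stein s'applique \`a une application holomorphe d\'ej\`a construite, pas \`a un feuilletage ; la phrase o\`u vous faites appara\^itre $f$ n'est donc pas justifi\'ee.

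C'est pr\'ecis\'ement pour franchir ce cap que la preuve du texte introduit l'application d'Albanese de $Y$ et la vari\'et\'e ab\'elienne minimale $A$ dont provient $\alpha$, puis d\'emontre le lemme cl\'e : l'image de $Y$ dans $A$ est de dimension $1$. Ce lemme repose sur le th\'eor\`eme de Simpson (connexit\'e des fibres de la primitive de $\alpha$ sur $Y'=Y\times_A V$ et surjectivit\'e au niveau des groupes fondamentaux) et, de fa\c{c}on essentielle, sur la minimalit\'e de l'action : si l'image \'etait de dimension au moins $2$, le sous-groupe des commutateurs de $\pi_1(Y)$ agirait trivialement sur l'enveloppe convexe de l'image de $u$, alors qu'un groupe agissant sur un arbre sans fixer sommet, bout ni paire de bouts a une image dans le groupe d'automorphismes contenant un groupe libre non ab\'elien. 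Ce n'est qu'apr\`es ce lemme que la factorisation de Stein de $Y\to A$ fournit $f':Y\to S'$. Enfin, la factorisation $\rho=\phi\circ f_*$ demande elle aussi un argument que vous passez sous silence : il faut v\'erifier que l'image du groupe fondamental d'une fibre g\'en\'erique est triviale (c'est un sous-groupe distingu\'e dont le sous-arbre fixe est stabilis\'e par $G$, donc \'egal \`a $T$ par minimalit\'e), invoquer la suite exacte orbifolde de Campana, puis redescendre de $Y$ \`a $X$ en faisant agir le groupe de Galois de $Y/X$ sur $S'$. Votre argument final pour l'hyperbolicit\'e de la base est, lui, correct.
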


L'existence de la fibration d'un revêtement fini vers une 
surface de Riemann est un résultat classique issu des travaux de Gromov et Schoen \cite{GS} dans le 
cas de l'action d'un  amalgame sur un arbre localement fini, au 
moyen de la théorie des applications harmoniques à valeurs dans des complexes simpliciaux développée dans leur article.
Leurs résultats sont également expliqués dans le livre \cite{ABC}. 
Une autre preuve de leur résultat, valable pour un amalgame quelconque peut être trouvée dans \cite{NR}.

Pour démontrer le théorème \ref{noyau}, l'existence d'une telle application ne suffit pas, 
on a besoin d'un résultat plus précis de factorisation du morphisme $\rho$.
De tels résultats ont été récemment utilisés, pour d'autres groupes, 
par Corlette et Simpson  \cite{CS}, Delzant et Py \cite{DP} ou Py \cite{Py}. Une condition nécessaire 
pour obtenir une 
factorisation est la minimalité de l'action. Ici ceci correspond à ne fixer aucun sous-arbre propre.
On obtiendra la factorisation voulue en remplaçant la surface de Riemann par une 
orbisurface de Riemann.

Bien que des résultats similaires à ce théorème soient nombreux dans la littérature, nous
en donnons la démonstration détaillée.

\begin{proof}
\textbf{Première étape : existence d'une application harmonique équivariante}

La première étape est de construire une application harmonique équivariante non constante $u$ du
revêtement universel de $X$ vers l'arbre $T$. 
Puisqu'on ne requiert pas que l'arbre soit localement fini, 
il faut faire appel aux travaux de Korevaar et Schoen. 

\begin{prop}
\cite[Corollary 2.3.2]{KS}
Si le groupe fondamental d'une variété riemannienne compacte agit sur un 
arbre alors soit l'action fixe un bout, soit il existe une application
harmonique équivariante du revêtement universel de la variété vers l'arbre.
\end{prop}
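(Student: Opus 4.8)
Notons $\widetilde{M}$ le rev\^etement universel de la vari\'et\'e riemannienne compacte $M$ et $\Gamma = \pi_1(M)$. L'arbre $T$ est un espace m\'etrique complet \`a courbure n\'egative ou nulle au sens de CAT$(0)$, et l'on dispose, gr\^ace \`a la th\'eorie des espaces de Sobolev \`a valeurs dans de tels espaces d\'evelopp\'ee par Korevaar et Schoen, d'une fonctionnelle d'\'energie $E(u) = \int_M |\nabla u|^2$ d\'efinie sur l'espace des applications $\rho$-\'equivariantes de $\widetilde{M}$ vers $T$ d'\'energie finie. Le plan est de minimiser cette \'energie. Comme $M$ est compacte, un domaine fondamental de $\widetilde{M}$ est relativement compact, ce qui permet de construire par prolongement \'equivariant une application lipschitzienne $\rho$-\'equivariante ; l'espace consid\'er\'e est donc non vide et l'infimum $E_0$ de l'\'energie y est fini. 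On se donne une suite minimisante $(u_i)$.

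Le point crucial est la convexit\'e de l'\'energie h\'erit\'ee de la condition CAT$(0)$. Si $w_{ij}$ d\'esigne l'application qui \`a $x$ associe le milieu du segment g\'eod\'esique joignant $u_i(x)$ \`a $u_j(x)$ --- application encore $\rho$-\'equivariante car $\rho(\gamma)$ est une isom\'etrie --- l'in\'egalit\'e ponctuelle de Korevaar et Schoen fournit
\[
E(w_{ij}) \le \tfrac{1}{2}E(u_i) + \tfrac{1}{2}E(u_j) - \tfrac{1}{4}\int_M |\nabla\, d(u_i,u_j)|^2,
\]
o\`u la fonction $d(u_i,u_j)$ descend en une fonction de $W^{1,2}(M)$ puisque la distance est $\Gamma$-invariante. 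Comme $E(w_{ij}) \ge E_0$ et que $\tfrac{1}{2}E(u_i)+\tfrac{1}{2}E(u_j) \to E_0$, on en d\'eduit que $\int_M |\nabla\, d(u_i,u_j)|^2 \to 0$. L'in\'egalit\'e de Poincar\'e sur la vari\'et\'e compacte connexe $M$ montre alors que $d(u_i,u_j)$ converge vers une constante $c_{ij}$.

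On distingue ensuite deux cas selon le comportement de $c_{ij}$. Si $c_{ij} \to 0$, la suite $(u_i)$ est de Cauchy pour la distance $L^2$ ; comme l'espace des applications \'equivariantes d'\'energie finie \`a valeurs dans un espace CAT$(0)$ complet est lui-m\^eme complet, elle converge vers une limite $u_\infty$. La semi-continuit\'e inf\'erieure de l'\'energie donne $E(u_\infty) = E_0$, et la r\'egularit\'e des minimiseurs (Korevaar--Schoen) assure que $u_\infty$ est harmonique : c'est la seconde alternative. Si au contraire $c_{ij} \not\to 0$, les images s'\'echappent \`a l'infini de fa\c{c}on coh\'erente --- les fonctions $d(u_i,u_j)$ \'etant asymptotiquement constantes, la suite se comporte comme une famille de translat\'es parall\`eles --- et dans un arbre une telle suite converge vers un unique bout $\xi \in \partial T$. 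L'invariance de toute la construction sous $\Gamma$ (l'\'energie descend sur $M$ compacte et les $u_i$ sont $\rho$-\'equivariantes) force alors $\rho(\Gamma)$ \`a fixer $\xi$ : c'est la premi\`ere alternative.

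Le principal obstacle est le traitement du second cas. D'une part, l'arbre $T$ n'\'etant pas suppos\'e localement fini, on ne peut recourir \`a la compacit\'e locale, et c'est pr\'ecis\'ement ce qui rend n\'ecessaire la th\'eorie de Korevaar et Schoen plut\^ot que celle de Gromov et Schoen : il faut contr\^oler la suite \'echappante dans la compactification $T \cup \partial T$ et y identifier un bout limite bien d\'efini. D'autre part, \'etablir rigoureusement la $\Gamma$-invariance de ce bout demande de quantifier la coh\'erence des directions de fuite des suites translat\'ees $x \mapsto u_i(\gamma x)$, ce qui repose sur la convexit\'e de la fonctionnelle d'\'energie et sur la g\'eom\'etrie de l'arbre.
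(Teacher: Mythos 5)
The paper gives no proof of this proposition: it is imported from Korevaar--Schoen, and the text only remarks that one minimises the energy over équivariant maps and that the hypothèse sur les bouts sert à empêcher la suite minimisante de partir à l'infini. Your sketch reproduces exactly cette stratégie (convexité de l'énergie via l'inégalité du point milieu dans un espace CAT$(0)$, Poincaré, dichotomie entre convergence de Cauchy vers un minimiseur harmonique et fuite vers un bout fixé), et vous identifiez correctement le traitement du cas de fuite --- c\oe ur technique de [KS] --- comme le point que votre esquisse ne détaille pas.
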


Les applications harmoniques sont les minimiseurs d'une certaine énergie
(voir \cite{GS} et \cite{KS1}).
Korevaar et Schoen construisent dans \cite{KS1}
une suite minimisante formée d'applications équivariante.
Pour éviter que la limite ne parte à l'infini, il est nécessaire d'imposer la 
condition que $G$ ne fixe pas de bouts de $T$.

L'application harmonique obtenue est ici non constante par équivariance : si elle 
était constante alors son image serait fixée par le groupe $G$, ce qui 
contredit l'hypothèse que $G$ ne fixe pas de sous-arbre propre.

Dans la suite, on utilisera des résultats de \cite{GS} où les résultats des auteurs 
ne s'appliquent 
a priori qu'à des arbres localement finis. Cependant gr\^ace à Sun \cite{sun}, 
l'image d'un voisinage assez petit d'un point par une application harmonique
vers un arbre est contenue dans un sous-arbre localement fini. 
Ainsi, les résultats locaux s'étendent aux arbres non localement finis.
\newline

\textbf{Deuxième étape : donnée spectrale}

Un point $x$ de $\widetilde{X}$ est dit \emph{régulier} si l'image par $u$ d'un voisinage de $x$ 
est contenue dans une géodésique de $T$. Il est dit \emph{singulier} sinon. 
Comme une géodésique de $T$ est une copie de $\mathbb{R}$, l'application $u$ peut alors être considérée
au voisinage d'un point régulier comme une application à valeurs dans $\mathbb{R}$.

\begin{thm}
\cite[Theorem 6.4]{GS}
L'ensemble des points singuliers de $\widetilde{X}$ est de codimension de Hausdorff au moins 2.
\end{thm}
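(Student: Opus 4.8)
Le plan est d'\'etablir ce r\'esultat par une analyse de type Almgren--Federer du lieu singulier, fond\'ee sur la formule de monotonie et les applications tangentes. D'abord, je d\'efinirais en chaque point $x_0$ de $\widetilde{X}$ la fonction d'ordre $\mathrm{ord}(x_0)$ comme limite, lorsque $\sigma\to 0$, du quotient normalis\'e
$$\mathrm{Ord}_{x_0}(\sigma)=\frac{\sigma\int_{B_\sigma(x_0)}|\nabla u|^2}{\int_{\partial B_\sigma(x_0)} d(u,u(x_0))^2},$$
dont la croissance r\'esulte de la formule de monotonie de Gromov et Schoen. Celle-ci vaut localement gr\^ace au r\'esultat de Sun, qui permet de se ramener \`a un sous-arbre localement fini. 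Cette fonction d'ordre est semi-continue sup\'erieurement et minor\'ee par $1$.

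Ensuite, je proc\'ederais \`a l'analyse par \'eclatement : les applications reparam\'etr\'ees $u_\lambda(x)=u(x_0+\lambda x)$, l'arbre $T$ \'etant muni de la distance renormalis\'ee $d/\mu(\lambda)$, convergent (apr\`es extraction) vers une application tangente $u_*$ non constante, harmonique, homog\`ene de degr\'e $\mathrm{ord}(x_0)$, d\'efinie sur l'espace tangent $\mathbb{R}^n$ et \`a valeurs dans le c\^one tangent \`a $T$ en $u(x_0)$, qui est lui-m\^eme un arbre. Je caract\'eriserais alors les points r\'eguliers : si l'application tangente en $x_0$ ne branche pas, c'est-\`a-dire si elle est \`a valeurs dans une unique g\'eod\'esique du c\^one tangent, alors $x_0$ est r\'egulier, et r\'eciproquement. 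En un point singulier, au contraire, l'application tangente branche v\'eritablement, donc est elle-m\^eme singuli\`ere en $0$ ; par homog\'en\'eit\'e, son propre lieu singulier est un c\^one de sommet $0$.

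La partie difficile sera la r\'eduction dimensionnelle. L'ensemble $V(u_*)$ des directions d'invariance par translation d'une application tangente singuli\`ere homog\`ene est un sous-espace vectoriel, et le quotient par $V(u_*)$ fournit une application singuli\`ere homog\`ene en dimension strictement inf\'erieure. En raisonnant sur les points de densit\'e du lieu singulier $\mathcal{S}$, je montrerais que si $\mathcal{H}^s(\mathcal{S})>0$ pour un exposant $s>n-2$, alors l'it\'eration de l'extraction d'applications tangentes produit une application harmonique homog\`ene non constante sur un domaine de dimension $1$, singuli\`ere \`a l'origine.

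Il resterait enfin \`a traiter le cas de base, qui fournit la contradiction : une application harmonique d'un domaine de dimension $1$ dans un arbre est localement une g\'eod\'esique minimisante, dont l'image est contenue dans une g\'eod\'esique de $T$ ; elle n'a donc aucun point singulier. On conclut que $\mathcal{H}^s(\mathcal{S})=0$ pour tout $s>n-2$, soit une codimension au moins $2$. L'obstacle principal reste le c{\oe}ur analytique, \`a savoir l'\'etablissement de la formule de monotonie et de la caract\'erisation des points r\'eguliers pour une cible non lisse, o\`u la th\'eorie elliptique classique \'echoue aux points de branchement ; c'est pr\'ecis\'ement le r\'esultat de Sun qui permet d'importer localement la machinerie de Gromov et Schoen.
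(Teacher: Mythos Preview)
Il faut d'abord noter que l'article ne d\'emontre pas ce th\'eor\`eme : l'\'enonc\'e est simplement \emph{cit\'e} tel quel de Gromov--Schoen (Theorem~6.4 de \cite{GS}) et utilis\'e comme bo\^ite noire dans la preuve du th\'eor\`eme~\ref{GS}. Il n'y a donc pas de \og preuve de l'article \fg{} \`a laquelle comparer votre proposition ; l'article se contente d'invoquer le r\'esultat, en signalant que l'extension aux arbres non localement finis passe par le r\'esultat local de Sun.

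Cela dit, votre esquisse est un r\'esum\'e fid\`ele de la strat\'egie originale de Gromov--Schoen : fonction d'ordre et formule de monotonie, extraction d'applications tangentes homog\`enes, caract\'erisation des points r\'eguliers par l'absence de branchement de l'application tangente, puis r\'eduction dimensionnelle \`a la Federer via le sous-espace d'invariance par translation. Deux remarques de d\'etail. D'une part, la formule de monotonie et l'existence des applications tangentes pour des cibles NPC g\'en\'erales rel\`event plut\^ot de Korevaar--Schoen que de Sun ; le r\'esultat de Sun sert, comme dans l'article, \`a ramener l'analyse locale \`a un sous-arbre localement fini pour pouvoir importer la r\'egularit\'e fine de \cite{GS}. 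D'autre part, dans la r\'eduction dimensionnelle, l'argument effectif de Gromov--Schoen passe par la classification des applications harmoniques \emph{homog\`enes de degr\'e~$1$} de $\mathbb{R}^n$ dans un complexe $F$-connexe (leur notion de \og effectively standard \fg), qui ram\`ene le point singulier \`a un point r\'egulier ; votre cas de base en dimension~$1$ est correct mais un peu sommaire : il faut justifier que l'ordre d'une application tangente singuli\`ere est exactement~$1$ (point non trivial qui utilise le \og gap \fg{} d'ordre, cons\'equence de l'estimation des valeurs propres sur la sph\`ere), sans quoi l'\'enonc\'e \og harmonique en dimension~$1$ donc g\'eod\'esique \fg{} ne s'applique pas directement \`a une application homog\`ene de degr\'e $\alpha>1$.

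En r\'esum\'e : votre proposition est une esquisse raisonnable de la preuve \emph{de Gromov--Schoen}, mais elle va bien au-del\`a de ce que fait l'article, qui se borne \`a citer le r\'esultat.
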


L'application $u$ est en fait \emph{pluriharmonique} sur l'ensemble des points réguliers par 
\cite[Theorem 7.3]{GS}, c'est à dire que, considérée comme une fonction à valeurs réelles au voisinage d'un point
régulier, elle est vraiment une fonction pluriharmonique (i.e. $\partial\overline{\partial}u=0$).
Ainsi, la différentielle de $u$ au voisinage d'un point régulier, que l'on peut définir à valeur 
dans $\mathbb{R}$, au signe près car les géodésiques n'ont pas d'orientation fixée, est la partie 
réelle d'une 1-forme holomorphe $\theta$.

Le carré $\beta$ de $\theta$ est une différentielle quadratique holomorphe bien définie 
sur le lieu des points réguliers.
Le résultat \cite[Theorem 6.4]{GS} cité ci-dessus permet d'étendre $\beta$ en une différentielle quadratique 
définie sur $\widetilde{X}$ tout entier. Puisque $u$ est équivariante, $\beta$ descend sur $X$ 
en une différentielle quadratique holomorphe définie globalement, que l'on note encore $\beta$.

Considérons le revêtement fini (qui peut être ramifié) $Y$ de $X$ associé canoniquement à 
cette différentielle quadratique holomorphe. Il s'agit de la sous-variété de l'espace total 
du fibré cotangent $\Omega_X$ définie par :
$$Y=\{(x,w)\in \Omega_X:w^2=\beta(x)\}.$$
Alors le relèvement de $\beta$ à $Y$ est une 
différentielle quadratique qui est globalement le carré d'une forme différentielle holomorphe $\alpha$. 

Si le revêtement $Y$ est trivial, cela signifie que $\beta$ était déjà globalement un carré sur $X$. 
Dans ce cas on note $Y:=X$ dans la suite, et il ne sera pas nécessaire de faire un quotient lors de 
la quatrième étape.
\newline

\textbf{Troisième étape : Construction de la fibration}

On va maintenant utiliser l'article \cite{Sim}  de Simpson pour obtenir une fibration 
sur une surface de Riemann. Dans cette troisième étape, on n'utilise pas entièrement 
la minimalité de l'action, mais juste le fait que $G$ ne stabilise pas de paire de bouts.

Notons $\mathrm{Alb}(Y)$ la \emph{variété de Albanese} de $Y$, c'est à dire le tore complexe
défini par 
$$\mathrm{Alb}(Y)=H^0(Y,\Omega_Y)^*/H_1(Y,\mathbb{Z}).$$
Soit $\alpha_{\mathrm{Alb}}$ 
une 1-forme sur $\mathrm{Alb}(Y)$ telle que $\alpha$ soit le tiré en arrière 
de $\alpha_{\mathrm{Alb}}$ par l'\emph{application d'Albanese}  
$x\mapsto [\alpha \mapsto \int_{x_0}^x\alpha]$ de $Y$ vers $\mathrm{Alb}(Y)$ 
(pour un point base $x_0\in Y$ fixé).

Soit $A$ la variété abélienne minimale de laquelle $\alpha$ provient, 
définie par $A:=\mathrm{Alb}(Y)/B$ où 
$B$ est la somme des sous-variétés abélienne de $\mathrm{Alb}(Y)$ sur lesquelles 
$\alpha_{\mathrm{Alb}}$ est nulle.
Notons $\alpha_A$ la projection de $\alpha_{\mathrm{Alb}}$ sur $A$.

Enfin, notons $Y'=Y\times_AV$ le produit fibré, de fibre $A$, de $Y$ avec le revêtement
universel $V$ de $A$. 
Soit $z_0\in Y'$ un point qui est projeté sur $x_0$ dans $Y$, et $v_0$ sa projection 
dans $V$.
Le tiré en arrière de $\alpha_A$ à $V$ est exact, et s'écrit donc comme la différentielle
d'une unique fonction $g_V:V\rightarrow \mathbb{C}$ qui s'annule en $v_0$ . On note $g$
la composition de $g_V$ avec la projection $Y'\rightarrow V$.

On s'intéresse maintenant à la dimension de l'image de $Y$ dans $A$ par le quotient de 
l'application d'Albanese, pour pouvoir utiliser le résultat de Simpson.

\begin{lem}
La dimension de l'image de $Y$ dans $A$ est égale à 1.
\end{lem}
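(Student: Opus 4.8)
Le plan est d'\'etablir s\'epar\'ement les deux in\'egalit\'es $\dim Z \geq 1$ et $\dim Z \leq 1$, o\`u l'on note $p : Y \to A$ la compos\'ee de l'application d'Albanese avec la projection $\mathrm{Alb}(Y) \to A$ et $Z = p(Y)$ son image. La minoration est imm\'ediate : comme $u$ n'est pas constante, la forme $\theta$ n'est pas identiquement nulle, donc $\beta = \theta^2$ ni sa racine $\alpha$ sur $Y$ non plus ; puisque $\alpha = p^{*}\alpha_A$, la forme $\alpha_A$ est non nulle, l'application $p$ ne peut pas \^etre constante, et $\dim Z \geq 1$.

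Toute la difficult\'e r\'eside dans l'in\'egalit\'e inverse $\dim Z \leq 1$, et c'est l\`a qu'intervient l'hypoth\`ese que $G$ ne stabilise aucune paire de bouts. Je raisonnerais par l'absurde en supposant $\dim Z \geq 2$. Le point de d\'epart est que la primitive multiforme $g = \int \alpha$ r\'ealise l'application harmonique : sur le lieu r\'egulier on a $du = \mathrm{Re}(\alpha)$, de sorte qu'apr\`es rel\`evement au rev\^etement convenable la fonction pluriharmonique $\mathrm{Re}(g)$ co\"incide localement, au signe pr\`es, avec $u$. Les feuilles du feuilletage $\mathcal{F}$ d\'efini par $\alpha = 0$ sont donc contenues dans les fibres de $u$, et $\mathcal{F}$ est l'image r\'eciproque par $p$ du feuilletage lin\'eaire $\alpha_A = 0$ sur $A$ ; autrement dit $g$, et donc $u$, se factorise essentiellement par $p$, et l'application harmonique provient en r\'ealit\'e de la seule image $Z$.

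Le c\oe ur de l'argument consiste alors \`a analyser ces feuilles \`a l'aide des r\'esultats de Simpson \cite{Sim} sur les feuilles int\'egrales d'une $1$-forme holomorphe provenant d'une vari\'et\'e ab\'elienne minimale. Si $\dim Z \geq 2$, le feuilletage $\alpha_A = 0$ restreint \`a $Z$ est de dimension positive et sa feuille g\'en\'erique n'est pas ferm\'ee, son adh\'erence contenant un sous-tore de dimension $\geq 1$ ; les ensembles de niveau de $\mathrm{Re}(g)$ ne pourraient alors pas s\'eparer le rev\^etement universel selon le sch\'ema arborescent qu'impose une application harmonique vers un arbre. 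Plus pr\'ecis\'ement, je m'attends \`a ce qu'une image de dimension $\geq 2$ fournisse une direction pluriharmonique suppl\'ementaire ind\'ependante, for\c{c}ant soit la pr\'esence d'un plat de dimension $2$ dans l'arbre, ce qui est impossible, soit une droite invariante dont $G$ fixerait la paire de bouts, contredisant l'hypoth\`ese. Cette contradiction impose $\dim Z \leq 1$, d'o\`u $\dim Z = 1$ avec la minoration.

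Le principal obstacle est pr\'ecis\'ement ce dernier passage : transformer l'\'enonc\'e qualitatif selon lequel la cible est un arbre de dimension un en l'\'enonc\'e quantitatif selon lequel l'image d'Albanese est une courbe. Le point d\'elicat est de contr\^oler la structure globale des feuilles de $\mathcal{F}$ et de montrer que, en dehors du lieu singulier qui est de codimension de Hausdorff au moins $2$ par \cite[Theorem 6.4]{GS}, elles s'organisent en les fibres d'une vraie fibration plut\^ot qu'en un feuilletage irrationnel dense. C'est exactement ce que l'on extrait de la th\'eorie de Simpson conjugu\'ee au caract\`ere non \'el\'ementaire de l'action, c'est-\`a-dire \`a l'absence de paire de bouts fix\'ee ; une fois $\dim Z = 1$ acquise, le r\'esultat de Simpson fournira \`a l'\'etape suivante la fibration sur une courbe.
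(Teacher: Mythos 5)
Votre minoration $\dim Z \geq 1$ est correcte et co\"{\i}ncide avec celle du texte. En revanche, la majoration, qui est tout l'enjeu du lemme, n'est pas d\'emontr\'ee : vous le reconnaissez d'ailleurs vous-m\^eme (\emph{je m'attends \`a ce que}, \emph{le principal obstacle est pr\'ecis\'ement ce dernier passage}). Les deux issues que vous \'evoquez --- un plat de dimension $2$ dans l'arbre, ou une droite invariante dont $G$ fixerait la paire de bouts --- ne sont construites nulle part ; rien dans votre texte ne permet de passer d'un feuilletage $\alpha_A=0$ \`a feuilles \'eventuellement denses dans $Z$ \`a l'une ou l'autre de ces configurations dans $T$. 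Il s'agit donc d'un v\'eritable trou et non d'un d\'etail \`a compl\'eter.

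Le m\'ecanisme r\'eellement utilis\'e dans l'article est autre. Le th\'eor\`eme de Simpson \cite{Sim} y sert d'\'enonc\'e de type Lefschetz : si la dimension de l'image de $Y$ dans $A$ est au moins $2$, alors pour tout $t\in\mathbb{C}$ la fibre $g^{-1}(t)\subset Y'$ est connexe et son groupe fondamental se surjecte sur $\pi_1(Y')$. Comme la diff\'erentielle de $u$ s'annule sur les relev\'es de ces fibres, chaque composante connexe d'un tel relev\'e est envoy\'ee par $u$ sur un unique point $q$ de $T$, et l'\'equivariance donne $\rho(\gamma)q=q$ pour tout $\gamma\in\pi_1(g^{-1}(t))$ ; la surjectivit\'e force alors $\pi_1(Y')$ \`a fixer point par point l'enveloppe convexe $T'$ de l'image de $u$. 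Le rev\^etement $Y'\to Y$ \'etant ab\'elien, $\pi_1(Y')$ contient le sous-groupe des commutateurs de $\pi_1(Y)$, de sorte que l'image de $G$ dans $\mathrm{Aut}(T')$ serait ab\'elienne. C'est l\`a que l'hypoth\`ese entre en jeu : une action qui ne fixe ni sommet, ni bout, ni paire de bouts contient un groupe libre non ab\'elien dans son image d'apr\`es \cite{PV}, d'o\`u la contradiction. Votre lecture de \cite{Sim} en termes d'adh\'erence des feuilles et de sous-tores ne fournit pas cette cha\^{\i}ne d'implications ; c'est pr\'ecis\'ement la connexit\'e et la $\pi_1$-surjectivit\'e des fibres de $g$ lorsque l'image est de dimension au moins $2$ qu'il faut invoquer.
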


\begin{proof}
Si la 1-forme $\alpha$ est identiquement nulle, alors l'application harmonique 
de départ $u$ est constante, mais ce n'est pas le cas par construction.
Donc la dimension de l'image de $Y$ dans $A$ est supérieure ou égale à 1.
Supposons maintenant que la dimension de l'image de $Y$ dans $A$ soit supérieure ou égale à 2.

Par le théorème de Simpson \cite{Sim} énoncé dans le cas projectif, mais qui reste valable 
dans le cas kählérien, pour tout $t\in \mathbb{C}$, la fibre $g^{-1}(t)\subset Y'$ est 
connexe et son groupe fondamental se surjecte sur celui de $Y'$.

Le relèvement à $\widetilde{Y}$ de cette fibre $g^{-1}(t)$ 
est une réunion de composantes connexes que l'on note $\widetilde{Y}_{t,k}$.
La différentielle de $u$ est nulle sur les composantes $\widetilde{Y}_{t,k}$ par 
construction de $g$. 
On choisit une telle composante $\widetilde{Y}_{t,k}$, qui s'envoie donc sur un seul 
point $q\in T$ par $u$. 

Si $x\in \widetilde{Y}_{t,k}$ est un point, et si $\gamma \in \pi_1(g^{-1}(t))$ alors l'image 
de $x$ par $\gamma$ est encore dans la même composante.
En particulier, on a $u(\gamma x)=q=u(x)$.
Or, par équivariance de $u$, on a $u(\gamma x)=\rho(\gamma)u(x)$, donc on obtient
$\rho(\gamma)q=q$.
En d'autres termes, l'action de $\pi_1(g^{-1}(t))$ sur $T$ fixe $q$.
Puisque $\pi_1(g^{-1}(t))$ surjecte sur $\pi_1(Y')$, l'action de $\pi_1(Y')$ sur $T$ fixe $q$ également.
Ceci est vérifié pour tout $q$ dans l'image de $u$, donc l'action de $\pi_1(Y')$ est 
triviale sur l'image de $u$. L'action de $\pi_1(Y')$ est donc triviale sur le sous-arbre
$T'$ de $T$ qui est l'enveloppe convexe de l'image de $u$.

Le revêtement $Y'\rightarrow Y$ est un revêtement galoisien infini de groupe de Galois
le quotient de $\pi_1(Y)$ donné par le morphisme $\pi_1(Y)\rightarrow \pi_1(A)$. 
En particulier le groupe de Galois de $Y'/Y$ est abélien. 
Donc $\pi_1(Y')$ contient le sous-groupe des commutateurs de $\pi_1(Y)$.
Si la dimension de l'image de $Y$ dans $A$ est supérieure ou égale à 2, alors
le sous-groupe des commutateurs agit trivialement sur $T'$.

Ceci est impossible. En effet, les hypothèses sur l'action de $G$ sur $T$ impliquent 
que $G$ ne fixe ni sommet, ni bout, ni paire de bouts de $T$. Par conséquent, 
$G$ ne peut pas agir sur $T'$ en fixant un sommet, un bout ou une paire de bouts.
Il découle alors du théorème principal de \cite{PV} que l'image de $G$ dans $\mathrm{Aut}(T')$
contient un sous-groupe libre non abélien, donc elle ne peut pas être un groupe abélien.
\end{proof}

On rappelle que toute application holomorphe $\phi:X_1\rightarrow X_2$ partant d'une 
variété compacte $X_1$ admet une factorisation $\phi=\phi_a\circ \phi_b$ appelée 
\emph{factorisation de Stein} \cite{Ste} où $\phi_a: X_2'\rightarrow X_2$ est un revêtement fini sur son
image et
$\phi_b: X_1\rightarrow X_2'$ est une fibration. 

La factorisation de Stein de $Y\rightarrow A$ donne, puisque la dimension de l'image de $Y$ dans $A$ est 1,
$$Y \longrightarrow S' \longrightarrow A$$
où $S'$ est une surface de Riemann compacte, et $f' : Y\rightarrow S'$ est une fibration. 
\newline

\textbf{Quatrième étape : factorisation de la représentation}

La partie réelle de $\alpha$ est une 1-forme fermée car c'est localement la différentielle 
de $u$, et définit donc un feuilletage singulier.
Les fibres de $f'$ sont contenues 
dans les feuilles de ce feuilletage, puisque 
$f'$ factorise $Y\rightarrow A$. On en déduit que l'application
$u$ est localement constante sur le relèvement des fibres de $f'$.

L'image du groupe fondamental d'une fibre générique est un groupe $H$ distingué dans 
$\pi_1(Y)$. En effet, notons $S'_{\mathrm{reg}}$ la surface de Riemann $S'$ privée
des points singuliers de $f'$, et $Y_{\mathrm{reg}}$ son image réciproque.
Alors $f'\mid_{S'_{\mathrm{reg}}}$ vérifie la propriété de relèvement des homotopies, 
et on a une suite exacte 
$$1 \longrightarrow \pi_1(F) \longrightarrow \pi_1(Y_{\mathrm{reg}}) \longrightarrow \pi_1(S'_{\mathrm{reg}}) \longrightarrow 1$$
où $\pi_1(F)$ est le groupe fondamental d'une fibre. En particulier, $\pi_1(F)$
est distingué dans $\pi_1(Y_{\mathrm{reg}})$.
Or le morphisme $\pi_1(Y_{\mathrm{reg}}) \rightarrow \pi_1(Y)$ induit par l'inclusion 
est surjectif, donc l'image de $\pi_1(F)$ est distinguée dans $\pi_1(Y)$

D'autre part, l'image $H$ du groupe fondamental d'une fibre générique fixe au moins un 
point de $T$, puisqu'il fixe point par point 
l'image du relèvement de la fibre. Notons $T'$ le sous-arbre de $T$ fixé 
par $H$. Si $g\in \pi_1(Y)$ et $p \in T'$ alors 
$$H\cdot g\cdot p=g\cdot H\cdot p=g\cdot p$$
donc $g\cdot p \in T'$.
On en déduit que $T'$ est stabilisé par $G$ donc $T'$ est en fait l'arbre $T$ 
tout entier par minimalité de l'action.
Par conséquent, l'image du groupe fondamental d'une fibre générique est triviale dans $\rho(G)$. 

En considérant $S'_o$ la base orbifolde \cite[Définition 4.2]{Cam} de la fibration $f'$, 
on a alors une suite exacte 
$$1 \longrightarrow \pi_1(F) \longrightarrow \pi_1(Y) \longrightarrow \pi_1(S'_o) \longrightarrow 1$$
où $F$ est une fibre générique de $f'$, par \cite[Proposition 12.9]{Cam}. 
Notons $\rho' : \pi_1(Y)\rightarrow \rho(G)$ le morphisme induit par $\rho$. 
Comme l'image de $\pi_1(F)$ dans $\rho(G)$ est triviale,
on en déduit que $\rho'$ factorise par $\pi_1(Y) \rightarrow \pi_1(S'_o)$.

Pour descendre la factorisation à $X$, on va construire un quotient de $S'$ par le 
groupe $\Lambda$ du revêtement $Y/X$. 
Pour cela, on montre que ce groupe agit sur les fibres régulières de $f':Y\rightarrow S'$.
En effet, soit $\Sigma$ une hypersurface connexe de $Y$. Supposons que l'image du groupe fondamental
de $\Sigma$ par $\rho'$ soit triviale, alors $\Sigma$ est inclus dans une fibre de $f'$.
Sinon la restriction de $f'$ à $\Sigma$ serait non constante, donc surjective 
sur la surface de Riemann $S'$ et par factorisation de Stein une fibration sur 
un revêtement étale fini de $S'$. 
En particulier, l'image de $\pi_1(\Sigma)$ serait d'indice
fini dans $\pi_1(S'_o)$ et par la factorisation obtenue précédemment, on obtiendrait que l'image
de $\rho'$ dans $\rho(G)$ serait finie, ce qui est impossible.

Les fibres régulières de $f'$ sont donc les hypersurfaces maximales de $Y$ 
dont le groupe fondamental à une image triviale. On en déduit que le groupe (fini) $\Lambda$
agit sur les fibres régulières de $f'$. Ceci fournit une action sur $S'$.

On note $S$ le quotient par cette action, c'est une orbisurface de Riemann, et on a un
morphisme $f: X \longrightarrow S$.
Nous répétons alors le raisonnement effectué pour $Y$ et $S'$. 
Les groupes fondamentaux des fibres de $f$ ont une image triviale par $\rho$, donc 
en notant $S_o$ la base orbifolde de la fibration $f$, on obtient le résultat attendu,
toujours par \cite[Proposition 12.9]{Cam}.
\end{proof}

\section{Cas des amalgames}
Le théorème \ref{GS} s'applique en particulier aux groupes kählériens qui sont des amalgames. 

En effet (voir \cite[1.4.1 Théorème 6]{Ser}), si 
$G=G_1*_{\Delta}G_2$ alors on peut construire un arbre $T$ semi-homogène sur lequel $G$ agit sans inversion 
avec les propriétés suivantes:
\begin{itemize}
\item il y a deux orbites de sommets sous l'action de $G$, deux sommets adjacents étant dans des orbites 
distinctes;
\item le stabilisateur d'un sommet de la première orbite est isomorphe à $G_1$ et celui d'un sommet de la 
seconde est isomorphe à $G_2$;
\item enfin, le stabilisateur d'une arête est isomorphe à $\Delta$.
\end{itemize}

L'amalgame $G$ est dit non trivial lorsque l'indice de $\Delta$ dans $G_i$ est supérieur ou égal à 2 
pour $i=1$ ou $2$. On ne considére dans la suite que des amalgames non triviaux.
Il est facile de voir que $G$ agit dans ce cas sur l'arbre $T$ sans stabiliser ni bout, ni sous-arbre propre.
De plus, l'arbre $T$ a une infinité de bouts dès que l'amalgame est \emph{strict}, c'est-à-dire si l'indice de 
$\Delta$ dans $G_1$ est supérieur à 2, et l'indice de $\Delta$ dans $G_2$ est supérieur à 3.

Nous souhaitons préciser des conséquences du théorème \ref{GS}, premièrement sur les groupes kählériens simples, 
et deuxièmement sur les noyaux des représentations de groupes kählériens dans des groupes d'automorphismes d'arbres. En application de ces résultats, nous obtiendrons les résultats souhaités sur les groupes de 
Burger-Mozes.

En effet, Burger et Mozes ont montré que les groupes qu'ils ont construits sont des amalgames stricts :
\begin{prop} 
\cite[Theorem 5.5]{BM}
Soit $\Gamma \in \mathcal{F}_{BM}$, alors $\Gamma$ admet des sous-groupes $\Gamma_1$, $\Gamma_2$, $\Delta$, tels que $\Delta$ est d'indice
$2$ dans $\Gamma_1$, d'indice $2n$  dans $\Gamma_2$, où $2<n\in \mathbb{N}$,
et que $\Gamma$ s'écrive sous forme d'amalgame de la 
manière suivante : 
$$\Gamma=\Gamma_1*_{\Delta}\Gamma_2.$$ 
\end{prop}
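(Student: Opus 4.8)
Le plan est d'appliquer la th\'eorie de Bass-Serre \`a l'action induite de $\Gamma$ sur l'un des deux arbres du produit dont il est un r\'eseau. Rappelons que tout $\Gamma\in\mathcal{F}_{BM}$ est r\'ealis\'e comme un r\'eseau cocompact sans torsion de $\mathrm{Aut}(T_1)\times\mathrm{Aut}(T_2)$, o\`u $T_1$ et $T_2$ sont des arbres r\'eguliers et o\`u l'on note $2n$ le degr\'e de $T_1$. La construction assure que l'action induite de $\Gamma$ sur $T_1$ est transitive sur les sommets ; les conditions de transitivit\'e forte impos\'ees aux projections assurent de plus que cette action est \emph{transitive sur les ar\^etes orient\'ees} et que le stabilisateur de chaque sommet y est transitif sur la sph\`ere de rayon $1$ centr\'ee en ce sommet.

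Je consid\'ererais ensuite la subdivision barycentrique $T_1'$ de $T_1$, qui est un arbre bir\'egulier de degr\'es $(2,2n)$ : les milieux d'ar\^etes y sont de valence $2$ et les sommets d'origine de valence $2n$. La transitivit\'e de $\Gamma$ sur les ar\^etes orient\'ees de $T_1$ se traduit par une action sur $T_1'$ sans inversion, transitive sur les ar\^etes et poss\'edant exactement deux orbites de sommets (milieux et sommets d'origine). Le graphe quotient $\Gamma\backslash T_1'$ est donc un segment, et le th\'eor\`eme de structure de Bass-Serre \cite[1.4.1 Th\'eor\`eme 6]{Ser} fournit directement une d\'ecomposition en amalgame $\Gamma=\Gamma_1*_{\Delta}\Gamma_2$, o\`u $\Gamma_1$ est le stabilisateur d'un milieu d'ar\^ete $m$ (de mani\`ere \'equivalente, d'une ar\^ete g\'eom\'etrique $e$ de $T_1$), $\Gamma_2$ le stabilisateur d'un sommet $v$ de $T_1$ adjacent \`a $m$, et $\Delta=\Gamma_1\cap\Gamma_2$ le stabilisateur du drapeau $(v,e)$, c'est-\`a-dire de l'ar\^ete orient\'ee correspondante.

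Il reste \`a calculer les indices, qui sont donn\'es par les valences dans $T_1'$. Le milieu $m$ \'etant de valence $2$ et $\Gamma$ inversant l'ar\^ete $e$, le stabilisateur $\Gamma_1$ \'echange les deux demi-ar\^etes issues de $m$, d'o\`u $[\Gamma_1:\Delta]=2$. De m\^eme, le sommet $v$ \'etant de valence $2n$ et son stabilisateur $\Gamma_2$ agissant transitivement sur les $2n$ ar\^etes qui en sont issues, on obtient $[\Gamma_2:\Delta]=2n$. Comme $n>2$, on a $[\Gamma_1:\Delta]=2\geq 2$ et $[\Gamma_2:\Delta]=2n\geq 6>3$, donc l'amalgame est strict. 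L'obstacle principal r\'eside pr\'ecis\'ement dans les deux propri\'et\'es de transitivit\'e de l'action induite sur $T_1$ (transitivit\'e sur les ar\^etes orient\'ees et transitivit\'e des stabilisateurs de sommets sur les sph\`eres) : elles ne d\'ecoulent pas de la seule transitivit\'e sur les sommets du produit, mais du choix fin des groupes locaux op\'er\'e dans \cite[Theorem 5.5]{BM}, et ce sont elles qui fixent les valeurs exactes $2$ et $2n$ des deux indices. En revanche, ni la simplicit\'e ni la non-r\'esidualit\'e finie de $\Gamma$ n'interviennent ici.
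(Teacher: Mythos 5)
Le texte ne d\'emontre pas cette proposition : elle est import\'ee telle quelle de \cite[Theorem 5.5]{BM}, si bien qu'il n'y a pas de preuve interne \`a laquelle comparer la v\^otre ; on ne peut donc juger votre esquisse que sur le fond. Votre m\'ecanisme est le bon : c'est bien la th\'eorie de Bass--Serre appliqu\'ee \`a la subdivision barycentrique de l'un des deux arbres qui produit un amalgame d'indices $(2,2n)$, le $2$ provenant de l'existence d'\'el\'ements inversant une ar\^ete (cons\'equence de la transitivit\'e sur les ar\^etes orient\'ees) et le $2n$ de la transitivit\'e de l'action locale du stabilisateur d'un sommet sur les $2n$ directions issues de ce sommet. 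Le calcul des indices \`a partir de ces deux propri\'et\'es de transitivit\'e est correct.

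Le point qui reste en suspens --- et que votre affirmation selon laquelle la construction assure la transitivit\'e de l'action de $\Gamma$ sur les sommets de $T_1$ escamote --- est que $\Gamma\in\mathcal{F}_{BM}$ n'est pas le r\'eseau naturellement construit par Burger et Mozes, mais un sous-groupe simple d'indice fini de celui-ci. C'est le r\'eseau ambiant $\Lambda$ qui agit simplement transitivement sur les sommets du produit $T_1\times T_2$, donc transitivement sur les sommets et, gr\^ace au choix des groupes locaux, sur les ar\^etes orient\'ees de $T_1$. Un sous-groupe d'indice fini perd en g\'en\'eral ces transitivit\'es : il peut avoir plusieurs orbites de sommets ou d'ar\^etes, auquel cas le quotient de Bass--Serre n'est plus un segment et l'on obtient un graphe de groupes plus compliqu\'e, avec d'autres indices. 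Justifier que le sous-groupe simple conserve exactement la transitivit\'e sur les sommets, la transitivit\'e locale et l'inversion des ar\^etes --- ou, \`a d\'efaut, recalculer le graphe de groupes quotient pour ce sous-groupe pr\'ecis --- est la partie r\'eellement non triviale de l'\'enonc\'e cit\'e. Votre esquisse identifie correctement l'obstacle du c\^ot\'e des groupes locaux, mais passe sous silence celui du passage au sous-groupe d'indice fini.
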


\section{Groupes kählériens simples et propriété (FA)}

L'existence d'une fibration sur une orbisurface de Riemann, indépendamment de la factorisation, 
impose des restrictions sur les actions possibles d'un groupe kählérien sur un arbre.
Ici, on va montrer qu'un groupe kählérien simple vérifierait la propriété (FA) de Serre, 
et ceci implique que les groupes de Burger et Mozes ne sont pas kählériens.

\begin{defn}
Un groupe $G$ vérifie la \emph{propriété (FA)} si toute action sans inversion de $G$ sur un arbre $T$
fixe au moins un sommet de $T$.
\end{defn}

Serre a caractérisé les groupes ayant la propriété (FA) de la manière suivante.

\begin{thm}
\label{FA}
\cite[Théorème 15]{Ser}
Soit $G$ un groupe, alors $G$ a la propriété (FA) si et seulement s'il vérifie les trois 
propriétés suivantes:
\begin{itemize}
\item ce n'est pas un amalgame non trivial; 
\item il n'a pas de quotient isomorphe à $\mathbb{Z}$;
\item il est de type fini.
\end{itemize}
\end{thm}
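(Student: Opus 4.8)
The plan is to prove the two implications separately, treating the three conditions one at a time, and throughout I would freely use the dictionary of Bass--Serre theory between actions without inversion on a tree and fundamental groups of graphs of groups.

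For the necessity of the three conditions I would argue by contraposition on each. If $G=G_1*_\Delta G_2$ is a nontrivial amalgam, then the action of $G$ on the associated Bass--Serre tree (recalled above for amalgams) is without inversion and fixes no vertex, since a fixed vertex would force $\Delta=G_1$ or $\Delta=G_2$; hence $G$ fails (FA). If $G$ admits a surjection $\psi\colon G\to\mathbb{Z}$, let $\mathbb{Z}$ act by translations on the line $L$ (the tree with vertex set $\mathbb{Z}$ and edges $\{n,n+1\}$); then $g\cdot x:=\psi(g)+x$ is an action of $G$ on $L$ without inversion and without fixed vertex, so again (FA) fails. Finally, if $G$ is not finitely generated, I would build a fixed-point-free action: in the model case where $G=\bigcup_{n\ge 0}G_n$ is a strictly increasing union of proper subgroups, take the graph with vertex set $\bigsqcup_{n}G/G_n$ and an edge joining $gG_n$ to $gG_{n+1}$ for every $g$ and every $n$. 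One checks this graph is a tree, that left translation gives an action of $G$ preserving the type of a vertex (hence without inversion), and that a fixed vertex $gG_n$ would give $g^{-1}Gg\subseteq G_n$, i.e. $G=G_n$, a contradiction.

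For the sufficiency I would start from an arbitrary action of $G$ on a tree $T$ without inversion, assume it has no fixed vertex, and aim for a contradiction with (a) or (b). Since $G$ is finitely generated by (c), Serre's criterion (if every generator and every product of two generators is elliptic then $G$ is elliptic) produces a hyperbolic element, so the union of the axes is a nonempty minimal $G$-invariant subtree $T_0$, and finite generation forces the quotient graph $Y:=G\backslash T_0$ to be finite. By Bass--Serre theory $G$ is the fundamental group of a finite graph of groups carried by $Y$, with nontrivial edge data because the action is minimal and fixed-point-free. I would then split according to the topology of $Y$. If $Y$ contains a circuit, $G$ surjects onto $\pi_1(Y)$, a nontrivial free group, hence onto $\mathbb{Z}$, contradicting (b). If $Y$ is a tree, minimality forbids a free face, so some edge $e$ has both of its inclusions into the adjacent vertex groups proper; collapsing the two subtrees of $Y$ on either side of $e$ exhibits $G$ as a nontrivial amalgam over $G_e$, contradicting (a). In either case the hypotheses are violated, so no fixed-point-free action exists and $G$ has (FA).

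The main obstacle is the necessity of finite generation in full generality: the clean tree above is available only when $G$ is a strictly increasing \emph{countable} union of proper subgroups, which is automatic for countable $G$ but not for an arbitrary non-finitely-generated group. The delicate point is therefore to manufacture, from the directed family of all finitely generated subgroups whose union is $G$, a tree on which $G$ acts without a fixed vertex, since a directed poset need not contain a cofinal chain and so does not immediately yield the segment-like tree used above. A secondary point needing care in the sufficiency direction is the verification that minimality of the action on $T_0$ genuinely rules out a free face, for this is exactly what makes the resulting splitting a \emph{nontrivial} amalgam rather than a collapsible degenerate one.
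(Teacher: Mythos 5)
The paper offers no proof of this statement: it is quoted verbatim (up to the omission noted below) from Serre's book, so there is nothing internal to compare against. Your argument is, in all essentials, Serre's own proof of his Théorème 15: the Bass--Serre tree for a nontrivial amalgam, the translation action on the doubly infinite line for a $\mathbb{Z}$-quotient, the coset tree $\bigsqcup_n G/G_n$ for a strictly increasing union of proper subgroups, and, for sufficiency, the passage to the minimal invariant subtree with finite quotient graph, splitting according to whether that graph has a circuit (giving a $\mathbb{Z}$-quotient) or is a tree (giving an amalgam). The ``obstacle'' you flag at the end is not a defect of your write-up but of the statement as transcribed here: Serre's theorem carries the hypothesis that $G$ is \emph{countable} (dénombrable), precisely because the coset-tree construction needs a cofinal increasing sequence of proper finitely generated subgroups. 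Without countability the finite-generation direction is genuinely false --- $\mathrm{Sym}(\mathbb{N})$ has property (FA) but is not finitely generated --- so your refusal to claim the general case is correct. For the use made of the theorem in this paper the point is immaterial, since Kähler groups are finitely presented.

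One step of your sufficiency argument needs repair. From minimality you infer that ``some edge $e$ has both of its inclusions into the adjacent vertex groups proper''; this does not follow. A segment of groups $v_1 - w - v_2$ with $G_{e_1}=G_w=G_{e_2}$ but $G_{e_i}\subsetneq G_{v_i}$ gives a minimal action in which no edge has both inclusions proper. What minimality actually gives, and what you should use, is that for \emph{any} edge $e$ of the finite tree $Y$, collapsing the two complementary subtrees yields $G=A*_{G_e}B$ with $G_e\neq A$ and $G_e\neq B$: if, say, $G_e=A$, then $G=B$ and the Bass--Serre subtree over the $B$-side is a proper $G$-invariant subtree (or, in the degenerate case, a fixed vertex), contradicting minimality. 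With that substitution the proof is complete for countable $G$.
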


Cette caractérisation est déterminante pour notre résultat, puisque le théorème \ref{GS} 
permet d'exclure les possibilités pour un groupe n'ayant pas la propriété (FA).

\begin{prop}
\label{simplek}
Soit $G$ un groupe kählérien. Si $G$ est simple alors $G$ a la propriété (FA).
\end{prop}

\begin{proof}
Soit $X$ une variété kählérienne compacte et $G:=\pi_1(X)$ son groupe fondamental.
Supposons que $G$ est simple. 

\begin{itemize}
\item Puisque $X$ est compacte, son groupe fondamental est de type fini. 

\item Si $G$ a un quotient isomorphe à $\mathbb{Z}$ alors $G$ est 
isomorphe à $\mathbb{Z}$ puisque $G$ est simple. 
Or $\mathbb{Z}$ n'est pas kählérien car son rang est impair.

\item Si $G$ peut s'écrire comme un amalgame non trivial, on distingue deux cas, 
selon que l'amalgame est strict ou non.

\begin{itemize}
\item Si $G$ est un amalgame non strict, alors $G$ a pour 
image dans le groupe d'automorphisme de la chaîne doublement infinie le groupe 
dihédral infini
qui n'est pas simple (il contient $\mathbb{Z}$ comme groupe distingué d'indice deux par exemple).

\item Enfin, si $G$ est un amalgame strict, alors on peut appliquer le théorème \ref{GS}. 
On obtient un morphisme surjectif de $G$ vers le groupe fondamental d'une orbisurface
de Riemann hyperbolique $S$. C'est un isomorphisme car $G$ est simple.

Un groupe d'orbisurface hyperbolique est résiduellement fini 
(on peut appliquer le théorème de Mal'cev puisqu'un groupe d'orbisurface hyperbolique est un 
sous-groupe finiment engendré de $\mathrm{PSL}_2(\mathbb{R})$). 

Or $G$ n'est pas résiduellement fini (car simple infini), 
donc on aboutit à une contradiction.
\end{itemize}
\end{itemize}

On prouve ainsi, par le théorème \ref{FA}, que le groupe $G$ a la propriété (FA).
\end{proof}

\begin{proof}[Démonstration du corollaire \ref{nonK}]
Supposons que $X$ soit une variété kählérienne compacte dont le groupe fondamental $\pi_1(X)$
est isomorphe à $\Gamma$.
En particulier, $\pi_1(X)$ est un groupe kählérien simple.
Par la proposition \ref{simplek}, ce groupe a la propriété (FA).
Or $\Gamma$ est un amalgame strict, et agit sur l'arbre associé sans inversion 
et sans sommet fixé, donc on aboutit à une contradiction.
\end{proof}

Le résultat n'est bien sûr pas vrai dans le cas d'un groupe simple infini non kählérien, et les
groupes de Burger-Mozes sont des contre-exemples.

\section{Restrictions sur les noyaux}

\begin{prop}
\label{mor}
Soient $T$ un arbre infini ayant plus de trois bouts, et $H$ un sous-groupe de $\mathrm{Aut}(T)$
qui ne stabilise ni bout, ni sous-arbre propre de $T$.
Soient $G$ un groupe kählérien et $\rho: G\rightarrow H$ un morphisme surjectif.
Alors 
\begin{itemize}
\item soit le noyau de $\rho$ n'est pas de type fini, 
\item soit $H$ est isomorphe à un groupe d'orbisurface de Riemann hyperbolique.
\end{itemize}
\end{prop}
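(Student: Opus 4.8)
The plan is to apply Théorème~\ref{GS} to the given morphism $\rho$ and then analyze the factorization it produces.

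Le plan est d'appliquer le th\'eor\`eme \ref{GS} au morphisme $\rho$, puis d'analyser le noyau de la factorisation obtenue en exploitant la structure des sous-groupes distingu\'es des groupes fuchsiens cocompacts. \emph{\'Etape 1 (factorisation).} Comme $G$ est k\"ahl\'erien, on fixe une vari\'et\'e k\"ahl\'erienne compacte $X$ avec $G=\pi_1(X)$. Le morphisme $\rho:G\to H\subset\mathrm{Aut}(T)$ a pour image $H$, qui ne stabilise ni bout ni sous-arbre propre ; l'action de $G$ sur $T$ v\'erifie donc les hypoth\`eses du th\'eor\`eme \ref{GS}. On obtient une orbisurface de Riemann hyperbolique $S_{\mathrm{orb}}$, une fibration $f:X\to S_{\mathrm{orb}}$ et un morphisme $\phi:P\to\mathrm{Aut}(T)$, o\`u $P:=\pi_1(S_{\mathrm{orb}})$, tels que $\rho=\phi\circ f_*$. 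Puisque $f$ est une fibration sur une orbisurface, $f_*$ est surjectif, d'o\`u $\phi(P)=\rho(G)=H$ : $\phi$ est donc une surjection de $P$ sur $H$.

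\emph{\'Etape 2 (r\'eduction au noyau de $\phi$).} La derni\`ere \'etape de la preuve du th\'eor\`eme \ref{GS} fournit une suite exacte $1\to\pi_1(F)\to\pi_1(X)\to P\to1$ o\`u $F$ est une fibre g\'en\'erique de $f$ ; comme $F$ est compacte, le noyau $\ker(f_*)$, image de $\pi_1(F)$, est de type fini. La surjectivit\'e de $f_*$ montre que $f_*$ envoie $\ker(\rho)=f_*^{-1}(\ker\phi)$ sur $\ker\phi$, d'o\`u une suite exacte $1\to\ker(f_*)\to\ker(\rho)\to\ker(\phi)\to1$. Le terme de gauche \'etant de type fini, $\ker(\rho)$ est de type fini si et seulement si $\ker(\phi)$ l'est. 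Il reste donc \`a prouver : si $N:=\ker(\phi)$ est de type fini, alors $H\cong P$ est un groupe d'orbisurface hyperbolique.

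\emph{\'Etape 3 (point crucial).} Le sous-groupe $N$ est distingu\'e dans le groupe fuchsien cocompact $P$ et suppos\'e de type fini. J'utiliserais le fait classique suivant : tout sous-groupe distingu\'e de type fini d'un groupe fuchsien cocompact est fini ou d'indice fini. On s'y ram\`ene en prenant, par le lemme de Selberg, un sous-groupe de surface $P_0$ d'indice fini dans $P$ et en appliquant \`a $N\cap P_0$ le fait qu'un sous-groupe distingu\'e de type fini et d'indice infini d'un groupe de surface hyperbolique est trivial (sinon un tel sous-groupe, libre, et son quotient, \'egalement libre, donneraient $\chi\ge0$ par multiplicativit\'e de la caract\'eristique d'Euler, contredisant $\chi(P_0)<0$). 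Si $N$ est d'indice fini, alors $H=P/N$ est fini ; mais toute orbite de $H$ dans $T$ serait alors finie, donc $H$ stabiliserait l'enveloppe convexe born\'ee d'une orbite, un sous-arbre propre de $T$, ce qui est exclu. Si $N$ est fini, il est trivial, car un groupe fuchsien cocompact n'a pas de sous-groupe distingu\'e fini non trivial (le centralisateur dans $\mathrm{PSL}_2(\mathbb{R})$ d'un sous-groupe non \'el\'ementaire est trivial). Dans tous les cas $N=\{1\}$, donc $\phi$ est un isomorphisme et $H$ est isomorphe au groupe d'orbisurface de Riemann hyperbolique $P$.

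L'obstacle principal est l'\'etape 3, c'est-\`a-dire le contr\^ole des sous-groupes distingu\'es de type fini de $P$ : une fois admis que ceux-ci sont triviaux ou d'indice fini, l'alternative de l'\'enonc\'e en d\'ecoule imm\'ediatement, tandis que les \'etapes 1 et 2 r\'esultent formellement du th\'eor\`eme \ref{GS} et de manipulations de suites exactes.
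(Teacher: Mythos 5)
Votre démonstration est correcte et suit essentiellement la même voie que celle de l'article : appliquer le théorème~\ref{GS}, ramener la question au noyau $N$ de $\phi$ (qui est un quotient de $\ker\rho$, donc de type fini), puis invoquer le fait classique (Greenberg/Griffiths, cité dans l'article) que les sous-groupes distingués de type fini d'un groupe d'orbisurface hyperbolique sont triviaux ou d'indice fini, et exclure le cas d'indice fini parce que $H$ est nécessairement infini vu son action sur $T$. Les seules différences sont cosmétiques : l'article se contente de la surjection $\ker\rho\twoheadrightarrow N$ (sans la suite exacte complète de votre étape~2) et cite directement l'énoncé \og non trivial de type fini $\Rightarrow$ indice fini\fg{} plutôt que d'en esquisser la preuve et de traiter séparément le cas d'un $N$ fini.
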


\begin{proof}
Notons $X$ une variété kählérienne compacte de groupe fondamental isomorphe à $G$.

Appliquons le théorème \ref{GS} à $G$.
On obtient une orbisurface de Riemann hyperbolique $S$, 
une application holomorphe $f:X \rightarrow S$, surjective à fibres connexes,
 et un morphisme 
$\phi : \pi_1(S) \rightarrow H$ tel que $\rho=\phi \circ f_*$.
Notons $K$ le noyau de $\rho$ et $N$ celui de $\phi$.
Notons aussi $\Sigma=\pi_1(S)$.

Puisque $\rho$ est surjectif, on a la suite exacte :
$$1\longrightarrow K \longrightarrow G \stackrel{\rho}{\longrightarrow} H \longrightarrow 1.$$

En appliquant $f_*$ à cette suite exacte, on obtient une suite exacte :
$$1\longrightarrow N \longrightarrow \Sigma \stackrel{\phi}{\longrightarrow} H \longrightarrow 1.$$
En effet, $N$ est l'image par $f_*$ de $K$ car $f_*$ est surjectif, et $\rho=\phi \circ f_*$.
Le groupe $N$ est par conséquent un quotient de $K$. 

Pour montrer la proposition, il suffit de considérer le cas où $K$ est de type fini.
Dans ce cas, comme $N$ est un quotient de $K$, $N$ est aussi de type fini.

Or, un groupe fondamental $\Sigma$ d'une orbisurface de Riemann vérifie la propriété suivante 
(voir \cite{Gri} ou \cite{Grb}):  
les sous-groupes distingués de type fini non triviaux de $\Sigma$ sont d'indice fini.

Ce résultat implique que $N$ est trivial car $H$ est 
nécessairement un groupe infini.
Cela signifie que $H$ est isomorphe à $\Sigma$ puisque $\phi$ est surjective
de noyau trivial.
\end{proof}

Nous avons maintenant tous les outils pour montrer le théorème \ref{noyau} :

\begin{proof}[Démonstration du théorème \ref{noyau}]
Soit $\Gamma$ un groupe de Burger-Mozes, $G$ un groupe kählérien et $\rho : G\rightarrow \Gamma$
un morphisme surjectif. Supposons de plus que le noyau $K$ de $\rho$ soit de type fini.
Alors par la proposition \ref{mor}, $\Gamma$ est isomorphe à un groupe d'orbisurface hyperbolique. 
On a vu qu'un tel groupe est résiduellement fini, ceci implique donc que $\Gamma$ est 
résiduellement fini et on aboutit encore à une contradiction avec la simplicité du 
groupe infini $\Gamma$.
\end{proof}

Par contre, si $H$ est un groupe de type fini quelconque, alors il existe une infinité de groupes 
kählériens qui admettent un morphisme surjectif sur $H$.
En effet, les groupes de surfaces sont des groupes kählériens, qui admettent des quotients libres.
En choisissant une surface de genre suffisament élevé, on peut trouver un morphisme surjectif vers
un groupe libre au nombre de générateur arbitrairement grand. 

\textbf{Remerciements} : Je remercie mon directeur de thèse Philippe Eyssidieux pour m'avoir proposé d'étudier ces questions.

\bibliographystyle{alpha}
\bibliography{biblio}

\end{document}